\documentclass[12pt,a4paper,reqno]{amsart}
\usepackage{fullpage}
\usepackage{hyperref}
\usepackage{url}
\usepackage{newtxtext,newtxmath}
\usepackage{amsmath}

\newtheorem{theorem}{Theorem}

\newtheorem{lemma}[theorem]{Lemma}

\newtheorem{remark}{Remark}

\DeclareMathOperator{\lead}{lead}
\newcommand{\Etilde}{\widetilde{\mathcal{E}}}
\newcommand{\Stilde}{\widetilde{S}}
\newcommand{\dx}{\mathrm{d}}

\newcommand{\N}{\mathbb{N}}

\newcommand{\Z}{\mathbb{Z}}
\newcommand{\e}{\mathrm{e}}

\newcommand{\eps}{\varepsilon}
\newcommand{\Odip}[2]{\mathcal{O}_{#1}\!\left(#2\right)\mathchoice{\!}{}{}{}}
\newcommand{\Odig}[1]{\mathcal{O}\!\Bigl(#1\Bigr)\mathchoice{\!}{}{}{}}

\newcommand{\Odim}[1]{\mathcal{O}\bigl(#1\bigr)}
\newcommand{\Odi}[1]{\Odip{}{#1}}

\allowdisplaybreaks

\title{On the average number of representations of an integer
as a sum of polynomials computed at prime values}

\author{Alessandra Migliaccio and Alessandro Zaccagnini}

\address{
  AM: Dipartimento di Matematica e Informatica,
  Universit\`a degli Studi di Ferrara, Ferrara, Italy.
  email: \texttt{alessand.migliaccio@edu.unife.it}.
  AZ: [Corresponding Author]
  Dipartimento di Scienze Matematiche, Fisiche e Informatiche,
  Universit\`a di Parma, Parco Area delle Scienze, 53/a, 43124 Parma, Italy
  email: \texttt{alessandro.zaccagnini@unipr.it}}

\subjclass[2010]{Primary 11P32. Secondary 11P55, 11P05}

\date{\today}

\begin{document}

\begin{abstract}
We study the average number of representations of an integer $n$ as
$n = \phi(n_{1}) + \dots + \phi(n_{j})$, for polynomials
$\phi \in \Z[n]$ with $\deg(\phi) = k\ge 1$, $\lead(\phi) = 1$,
$j \ge k$, where $n_{i}$ is a prime power for each $i \in \{1, \dots, j\}$.
We extend the results of Languasco and Zaccagnini in \cite{bib12}, for
$k=3$ and $j=4$, and of Cantarini, Gambini and Zaccagnini in
\cite{bib1}, where they focused on monomials $\phi(n) = n^k$, $k\ge 2$ and
$j=k, k + 1$.
\end{abstract}

\maketitle

\section{Introduction}

Our goal is the study of the average number of representations of an
integer as the sum of values of a polynomial with integer coefficients
computed at prime powers.
More precisely, we will consider the following setting: we take
\[
  \phi\in\mathbb{Z}[n]
  \quad\text{with}\quad
  \phi(n)=\sum_{h=1}^k a_{h}n^h
  \quad\text{and}\quad
  a_{k} = 1.
\]
The choice $a_{k}=1$ simplifies the generalization of previous
results, but we will remove this constraint later; we also assume that
$\phi(0) = 0$ and that $\phi(n) \not \equiv n^k$ identically.
Let us define
\begin{equation}
\label{sec1:1}
  R_{\phi, j}(n)
  =
  \sum_{n=\phi(n_{1})+\dots+\phi(n_{j})}
    \Lambda(n_{1}) \dots \Lambda(n_{j}),
\end{equation}
for $j\ge k$, where $\Lambda(n)$ is the von Mangoldt function defined
as
\[
  \Lambda(n)
  =
  \begin{cases}
    \log p & \text{if $\exists\,t\in\N^*\,:\, n=p^t$} \\
    0 & \text{otherwise}.
  \end{cases}
\]
We study the average number of representations $R_{\phi, j}(n)$ of a
positive integer $n$, as sums of values of this kind of polynomials,
in short intervals of the form $[N+1, N+H]$, for $H\ge 1$ as small as
possible.

The main unconditional results of Cantarini, Gambini and Zaccagnini in
\cite{bib1}, extending earlier work by Languasco and Zaccagnini in
\cite{bib12}, were collected in their Theorems 1.1 and 1.3.
These authors studied the weighted average number of representations
$R_{k}$ and $R'_{k}$ of an integer $n$ as a sum of prime powers,
where, in our notation, $R_{k}(n)=R_{\phi,k+1}(n)$ and
$R'_{k}(n)=R_{\phi,k}(n)$, with $\phi(n)=n^k$.
We prove that estimates similar to those in \cite{bib1} still hold for
averages of $R_{\phi,j}(n)$, up to constants depending on the
polynomial $\phi$.
In contrast to the papers \cite{bib12} and \cite{bib1}, we give a
single statement and proof valid for all $j \ge k$, the most
interesting case being when $j$ is small, that is, $j = k$.
For positive integers $j$ and $k$ it is convenient to write
\[
  \gamma_{k}:=\Gamma\left(1+\frac{1}{k}\right)
  \quad \text{and}\quad
  \gamma_{k,j}:=\Gamma\left( \frac{j}{k}\right),
\]
where $\Gamma$ is the Euler Gamma function.
Throughout the paper, implicit constants may depend on the polynomial
$\phi$, or its degree $k$, and on the integer $j$.
Throughout the paper we write $L = \log(N)$ for brevity.
Finally, let
\begin{equation}
\label{sec1:2}
  A(N; C)
  =
  \exp\left(C\left(\frac{\log N}{\log\log N}\right)^{1/3}\right).
\end{equation}

\begin{theorem}
\label{thm-unc}
Let $j \ge k \ge 2$ be fixed integers.
Then, for every $\eps > 0$, there exists a constant $C = C(\eps) > 0$,
independent of $\phi$ and $j$, such that
\[
  \sum_{n=N+1}^{N+H} R_{\phi,j}(n)
  =
  \frac{\gamma_{k} ^j}{\gamma_{k,j}}\cdot HN^{(j-k)/k}
  +
  \Odi{H N^{(j - k) / k} A(N; -C)},
\]
as $N\to\infty$, uniformly for $N^{1-13/15k+\eps} < H < N^{1-\eps}$.
\end{theorem}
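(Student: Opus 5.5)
The plan is to follow the circle-method strategy of \cite{bib12} and \cite{bib1}, replacing the monomial $n^k$ by $\phi(n)$ throughout. Let $\alpha\in[-1/2,1/2]$, $e(x)=\e^{2\pi i x}$, and define the exponential sums
\[
  S_{\phi}(\alpha)=\sum_{\substack{m\ge1\\ \phi(m)\le N+H}}\Lambda(m)\,e(\phi(m)\alpha),
  \qquad
  U_{H}(\alpha)=\sum_{h=1}^{H} e(h\alpha).
\]
Then
\[
  \sum_{n=N+1}^{N+H}R_{\phi,j}(n)=\int_{-1/2}^{1/2}S_{\phi}(\alpha)^{j}\,U_{H}(-\alpha)\,e(-N\alpha)\,\dx\alpha .
\]
Since $\phi$ is monic, we have $\phi(m)=m^k(1+O(1/m))$ and $\phi$ is increasing for $m\ge m_0(\phi)$. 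Hence $S_\phi$ has length $\asymp N^{1/k}$, and only $O(1)$ small $m$ behave irregularly. I will compare $S_\phi$ with the approximant
\[
  T_\phi(\alpha)=\sum_{\phi(m)\le N+H}e(\phi(m)\alpha).
\]
A second comparison is with the integral $\int_{0}^{(N+H)^{1/k}}e(\phi(t)\alpha)\,\dx t$. After the substitution $u=\phi(t)$, this becomes $\frac1k\sum_{u}u^{1/k-1}e(u\alpha)$ plus a controllable error.

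\textbf{Main term.} Take the major arc to be $|\alpha|\le B/H$ with $B=A(N;c)$ small. There I replace $S_\phi^j$ by $V(\alpha)^j$, where $V(\alpha)=\frac1k\sum_{u\le N+H}u^{1/k-1}e(u\alpha)$. The $j$-fold convolution of $u^{1/k-1}$ gives
\[
  \frac{\Gamma(1/k)^j}{k^j\,\Gamma(j/k)}\,n^{j/k-1}+\text{lower order},
\]
and $\Gamma(1/k)/k=\gamma_k$. Summing over $n\in[N+1,N+H]$ yields $\frac{\gamma_k^j}{\gamma_{k,j}}HN^{(j-k)/k}$ with relative error $O(H/N)$. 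Completing the $\alpha$-integral back to $[-1/2,1/2]$ costs only a power of $B^{-1}$. The correction from $\phi(t)\neq t^k$ changes the density $u^{1/k-1}$ only by a relative factor $1+O(u^{-1/k})$, so it contributes a power saving.

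\textbf{Errors.} The error $S_\phi^j-V^j$ is expanded by the identity $a^j-b^j=(a-b)\sum a^{i}b^{j-1-i}$. It is controlled by three estimates.
\begin{enumerate}
\item An $L^2$ bound on short intervals, $\int_{-\xi}^{\xi}|S_\phi-V|^2\,\dx\alpha\ll N^{2/k-1}A(N;-C)$ for $\xi$ up to about $N^{-1+13/15k-\eps}$. This is obtained by Gallagher's lemma, which reduces it to the prime number theorem in short intervals $[x,x+h]$ for $x^{1/k}$, translated through $\phi^{-1}$. Here $\phi^{-1}(x+h)-\phi^{-1}(x)\asymp h x^{1/k-1}$ because $\phi$ is monic, and the Vinogradov--Korobov zero-free region gives the saving $A(N;-C)$. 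This is the source of the exponent $13/15k$, via Huxley-type zero density (as in \cite{bib1}, where the same range arises), with the $7/12$ short-interval exponent rescaled.
\item Mean values $\int|S_\phi|^2\ll N^{1/k}L$ and $\int|V|^2\ll N^{2/k-1}L$ on the relevant ranges.
\item For $j\ge k$, the remaining factors $|S_\phi|^{j-2}$ on $|\alpha|\le 1/2$ are handled with Hua's inequality for the polynomial $\phi$, which is valid for any degree-$k$ integer polynomial. For $j\ge k$ I would instead bound $\int|S_\phi|^{2}|S_\phi|^{j-2}$ via the trivial bound on $j-k$ factors plus $\int|S_\phi|^{k}|U_H|$.
\end{enumerate}

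On the minor arc $B/H<|\alpha|\le1/2$ I use $|U_H(\alpha)|\ll\min(H,1/|\alpha|)$, a dyadic decomposition in $\alpha$, Cauchy--Schwarz, and the $L^2$ bounds above.

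\textbf{Main obstacle.} The delicate point is the case $j=k$ in the minor arcs. There one needs $\int_{\xi}^{2\xi}|S_\phi|^{k}\,\dx\alpha$ with a saving, which requires an inequality of Hua type giving $\int_{-1/2}^{1/2}|S_\phi|^{k}\,\dx\alpha\ll N^{1-\delta}$ or a suitable $\eps$-loss. I expect this to work because the generalized Hua and Weyl bounds depend only on the leading coefficient of $\phi$. If that fails, my first fallback is to argue exactly as in \cite{bib1}: split the product $|S_\phi|^{k}$ as $|S_\phi|^{2}\cdot|S_\phi|^{k-2}$, and control $|S_\phi|^{k-2}$ through the $L^2$ comparison with $V$ together with the explicit estimate $|V(\alpha)|\ll|\alpha|^{-1/k}$. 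The uniformity of $C$ in $\phi$ and $j$ follows because only the zero-free region determines $C$.
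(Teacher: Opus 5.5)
There are two genuine gaps in your proposal. Your unsmoothed set-up ($S_\phi$, the approximant $V$, Gallagher's lemma, the convolution identity for the main term) is a legitimate alternative to the paper's smoothed one ($\Stilde_\phi$, $\gamma_k z^{-1/k}$, Lemma~\ref{lemma-mt}). Your major-arc error term also works, provided you expand $|S_\phi|^{j-1}\ll|V|^{j-1}+|S_\phi-V|^{j-1}$ before applying Cauchy--Schwarz; this is what identity~\eqref{sec3:8} does in the paper.

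\textbf{The range.} The exponent $13/15k$ does not come from Huxley. Huxley's density exponent $12/5$ gives the mean-square prime number theorem only in almost all intervals $[y,y+y^{1/6+\eps}]$. Rescaled through $\phi^{-1}$ with $y\asymp N^{1/k}$, this gives your $L^2$ bound only for $\xi<N^{-1+5/6k-\eps}$, that is $H>N^{1-5/6k+\eps}$. That is the range of \cite{bib1}, which is narrower than the one stated, not the same. To reach $13/15k$ you need the Guth--Maynard density exponent $30/13$ \cite{bib4}, i.e.\ almost all intervals of length $y^{2/15+\eps}$, inserted into Lemma~1 of \cite{lan16}. This is exactly the new input behind Lemma~\ref{lemma-L2}. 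The $7/12$ exponent concerns \emph{all} short intervals, which is not what Gallagher's lemma needs.

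\textbf{The minor arc for $k\ge3$.} You rightly flag this but do not resolve it. Since $|U_H(\alpha)|\asymp|\alpha|^{-1}$, the block $|\alpha|\asymp\xi$ requires $\int_\xi^{2\xi}|S_\phi|^j\,\dx\alpha\ll\xi HN^{j/k-1}A(N;-C)$.
\begin{itemize}
\item For $j=k$, with $H$ near $N^{1-13/15k+\eps}$ and $\xi$ near $B/H$ or near $\xi_0=N^{-1+13/15k-\eps}$, the right side is $N^{O(\eps)}$. But $\int_0^1|S_\phi|^k\,\dx\alpha\ge(\int_0^1|S_\phi|^2\,\dx\alpha)^{k/2}\gg N^{1/2}$, so no global Hua-type bound can help.
\item For $\xi<\xi_0$ one has $\int_\xi^{2\xi}|S_\phi|^2\,\dx\alpha\asymp\xi^{1-2/k}$, because the initial segment $m\le\xi^{-1/k}$ does not oscillate. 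Likewise $\int_{-\xi}^{\xi}|V|^2\,\dx\alpha\asymp\xi^{1-2/k}$ for $\xi\ge1/N$, so your mean value $\int|V|^2\ll N^{2/k-1}L$ is false for $k\ge3$.
\item Your fallback, writing $S_\phi=V+(S_\phi-V)$ and using $|V|\ll|\alpha|^{-1/k}$, does handle $B/H\le|\alpha|<\xi_0$. Beyond $\xi_0$, however, no $L^2$ comparison is available. The trivial bound $|S_\phi|\ll N^{1/k}$ on $j-2$ factors then leaves $N^{(j-2)/k}\xi_0^{-2/k}$, which is $o(HN^{j/k-1})$ only for $H>N^{1-26/15k^2+\eps}$. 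That covers the stated range only when $k=2$.
\end{itemize}

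You cannot patch this by copying the paper's treatment of $I_3$. It uses exactly this trivial bound on $j-2$ factors together with the first case of Lemma~\ref{lemma-tolev}, and that case fails for $k\ge3$. Indeed $\int_{-\tau}^{\tau}|\gamma_kz^{-1/k}|^2\,\dx\alpha\asymp\tau^{1-2/k}$ dominates $\int_{-\tau}^{\tau}|\Etilde_\phi|^2\,\dx\alpha$, so $\int_{-\tau}^{\tau}|\Stilde_\phi|^2\,\dx\alpha\gg\tau^{1-2/k}$. This is not $\ll BN^{2/k-1}A(N;-2c_1)$ unless $N/H\le N^{O(\eps)}$. The step $(H/N)^{2/k-1}\ll B^{2/k}A(N;-2c_1)$ in its proof goes the wrong way, since $2/k-1<0$.

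For $k=2$ the trivial-bound argument, yours or the paper's, is complete. For $k\ge3$ you would need genuinely new minor-arc input, such as short-interval higher-moment bounds in the spirit of Lemmas~6--7 of \cite{bib12}.
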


The improvement on the range of uniformity for $H$ in our
Theorem~\ref{thm-unc} is due to the recent result of Guth and Maynard
in \cite{bib4} on the density of the zeros of the Riemann zeta-function,
which would also affect all previous unconditional results in the
papers \cite{bib12} and \cite{bib1}.

\subsection{Outline of the proof}

We briefly sum up the strategy used in \cite{bib1} and \cite{bib12}.
We consider $j \ge k \ge 2$ fixed.
For $\phi$ as above, we introduce the exponential sum
\begin{equation}
\label{sec1:3}
  \Stilde_{\phi}(\alpha)
  =
  \sum_{n\ge 1} \Lambda(n) \e^{-\phi(n)/N} \e(\phi(n)\alpha)
  =
  \sum_{n\ge 1} \Lambda(n) \e^{-\phi(n)z},
\end{equation}
where $\e(\alpha) = \e^{2 \pi i \alpha}$,
$z = \displaystyle{\frac{1}{N}-2\pi i\alpha}$, and the finite sum
\[
  U(\alpha, H)
  =
  \sum_{m=1}^H \e(m\alpha), \qquad\text{for}\quad H\le N.
\]
Thus, we can rewrite $R_{\phi,j}(n)$ as
\[
  R_{\phi,j}(n)
  =
  \e^{n/N}
  \int_{-1/2}^{1/2} \Stilde_{\phi}(\alpha)^j \e(-n\alpha) \, \dx \alpha,
\]
by definition $\eqref{sec1:1}$ of $R_{\phi,j}(n)$ and by
orthogonality.
Summing $R_{\phi,j}(n)$ over $n$, we obtain that
\[
  \sum_{n=N+1}^{N+H} R_{\phi,j}(n) \e^{-n/N}
  =
  \int_{-1/2}^{1/2}
    \Stilde_{\phi}(\alpha)^j U(H, -\alpha) \, \e(-N\alpha) \, \dx\alpha.
\]
The expected main term comes from a comparatively short arc around 0.
Furthermore, $\Stilde_{\phi}(\alpha)$ is of size
$\gamma_{k}z^{-1/k}$, for ``small" $\alpha$.
Hence, we set $\tau = B H^{-1}$ and decompose the quantity we are
studying as follows:
\begin{align}
\notag
  \sum_{n=N+1}^{N+H} R_{\phi,j}(n) \e^{-n/N}
  &=
  \gamma_{k}^j
  \int_{-\tau}^{\tau} z^{-j/k}U(-\alpha, H) \e(-N\alpha) \, \dx \alpha \\
  &\quad+ \notag
  \int_{-\tau}^{\tau}
    (\Stilde_{\phi}(\alpha)^j-\gamma_{k}^jz^{-j/k})U(-\alpha, H)
    \e(-N\alpha) \, \dx \alpha \\
  &\quad+
  \int_{\mathcal{C}} \Stilde_{\phi}(\alpha)^j U(-\alpha, H) \e(-N\alpha)
    \, \dx \alpha \,
  =:\label{I123}
  \gamma_{k}^jI_{1} + I_{2} + I_{3},
\end{align}
say, where $\mathcal{C} = [-1/2, -\tau] \cup [\tau, 1/2]$ and
$B = N^{2 \eps}$.
The parameter $\tau$ controls the width of the ``major arc" around 0,
where it is easy to obtain a good approximation to $\Stilde_{\phi}$.

In fact, for small $\alpha$ the exponential sum $\Stilde_\phi$
behaves pretty much like
\begin{equation}
\label{sec1:2.5}
  \Stilde_{k}(\alpha)
  =
  \sum_{n \ge 1} \Lambda(n) \e^{-n^k /N} \, \e(n^k \alpha).
\end{equation}
The evaluation of $I_{1}$ is in Lemma~4 of \cite{lan16jnt}.
The bound for $I_{2}$ relies on our Lemma~\ref{lemma-L2} below, which
generalizes Lemma~4 of \cite{bib12} since we have $\Stilde_\phi$ in
place of $\Stilde_k$.
The estimate of $I_{3}$ requires Lemma ~\ref{lemma-L2}, as it is shown in Lemma ~\ref{lemma-tolev}.
Finally, we can conclude, using the same method of \S3 of
\cite{bib12}.

Two final remarks: the lower bound for $H$ in our Theorem~\ref{thm-unc}
is only due to the limitations in Lemma~\ref{lemma-L2}; at all other
places we just need the more natural bound $H > N^{1 - 1 / k + \eps}$.
Our proof does not rely on the generalization of Lemmas~6 and~7 of
\cite{bib12} to our case: indeed, we do not need bounds for the $L^4$
norm of $\Stilde_\phi$, but show that $L^2$ estimates suffice.
Apparently this simplification, which depends on the use of identity
\eqref{sec3:8} below introduced in \cite{bib1}, was overlooked by its
authors.

In contrast to previous papers, we do not have any conditional result
because the proof of Lemma~\ref{lemma-L2} on the whole unit interval
$[-\frac12, \frac12]$ seems to be quite hard: the Riemann Hypothesis
does not seem to help on the complement of $[-\tau, \tau]$.

\section{Lemmas}

It will spare us some repetition to notice at the outset that for
fixed $\alpha \ge 0$ and $k \ge 1$ we have
\begin{align}
\notag
  \int_0^{+\infty} t^\alpha \e^{- t^k / M} \, \dx t
  &=
  \frac1k M^{(\alpha + 1) / k}
  \int_0^{+\infty} u^{(\alpha + 1) / k - 1} \e^{- u} \, \dx u \\
\label{bound-int}
  &\ll_{\alpha, k}
  M^{(\alpha + 1) / k},
\end{align}
as $M \to +\infty$, which can be seen by the obvious change of variable.
We also recall the Prime Number Theorem in the weak form
\begin{equation}
\label{PNT}
  \psi(x)
  =
  \sum_{n \le x} \Lambda(n)
  \sim x
  \qquad\text{as $x \to +\infty$.}
\end{equation}

In order to deal with $I_{3}$, we need a new version of Lemma~3.3 of
\cite{bib1} with $\Stilde_{\phi}$ in place of $\Stilde_{k}$.

\begin{lemma}
\label{bound-S}
We have that $\Stilde_{\phi}(\alpha) \ll_{\phi} N^{1/k}$.
\end{lemma}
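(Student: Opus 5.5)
The plan is the trivial bound followed by partial summation, with the polynomial $\phi$ compared to $n^k$. Since $|\e(\phi(n)\alpha)| = 1$, we have
\[
  |\Stilde_{\phi}(\alpha)|
  \le
  \sum_{n \ge 1} \Lambda(n) \e^{-\phi(n)/N},
\]
so it suffices to bound this right-hand side by $\ll_\phi N^{1/k}$ uniformly in $\alpha$. The only feature of $\phi$ that matters is that $\lead(\phi)=1$ and $k\ge 1$. Hence there is $n_0 = n_0(\phi)$ such that $\phi(n) \ge \frac12 n^k$ for all $n \ge n_0$: indeed $\phi(n) - n^k = \sum_{h<k} a_h n^h = \Odim{n^{k-1}}$, with constants depending on the coefficients of $\phi$.

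We split the sum at $n_0$. The finitely many terms with $n < n_0$ contribute at most $\sum_{n<n_0} \Lambda(n) \e^{|\phi(n)|/N}$. For $N \ge 1$ this is $\ll_\phi 1$, since $\phi$ may be negative at small $n$ but $\e^{|\phi(n)|/N} \le \e^{\max_{n<n_0}|\phi(n)|}$. The remaining terms satisfy
\[
  \sum_{n \ge n_0} \Lambda(n) \e^{-\phi(n)/N}
  \le
  \sum_{n \ge 1} \Lambda(n) \e^{-n^k/(2N)} .
\]

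For this last sum we use partial summation with $\psi(x)$. Writing $f(t) = \e^{-t^k/(2N)}$, which is decreasing and tends to $0$, we get
\[
  \sum_{n\ge1}\Lambda(n) f(n) = \int_1^{\infty} \psi(t)\,(-f'(t))\,\dx t .
\]
By \eqref{PNT} (or just Chebyshev's bound) $\psi(t) \ll t$, so this is
\[
  \ll \int_0^\infty t \cdot \frac{k t^{k-1}}{2N} \e^{-t^k/(2N)}\, \dx t
  = \int_0^\infty \e^{-t^k/(2N)}\, \dx t,
\]
after integrating by parts back. By \eqref{bound-int} with $\alpha=0$ and $M = 2N$, the last integral is $\ll_k N^{1/k}$. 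Combining the two ranges gives $\Stilde_\phi(\alpha) \ll_\phi 1 + N^{1/k} \ll N^{1/k}$.

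The only point needing mild care is the small-$n$ range, where $\phi(n)$ can be negative, so the weight $\e^{-\phi(n)/N}$ is not bounded by $1$. This is handled by the finite-range argument above, and it is the source of the dependence of the implicit constant on $\phi$ rather than just on $k$. The partial summation step is routine.
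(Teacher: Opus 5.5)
Your proof is correct and follows essentially the paper's route: bound trivially in absolute value, apply partial summation with $\psi(t)\ll t$, and reduce to \eqref{bound-int}. The paper applies that bound with exponent $k$ and $M=N$ directly, while you integrate by parts back first and use exponent $0$ with $M=2N$, which is the same computation. Your comparison $\phi(n)\ge \frac12 n^k$ for $n\ge n_0(\phi)$, together with the separate treatment of the finitely many small $n$ where $\phi(n)$ may be negative, supplies a detail that the paper's one-line proof leaves implicit.
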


\begin{proof}
The proof is achieved by partial summation applied to the exponential
sum defined in \eqref{sec1:3}, using~\eqref{bound-int} with $\alpha = k$
and $M = N$, say, and \eqref{PNT}.
\end{proof}

\begin{lemma}
\label{lem-Sinf}
Let $R=R(N)\ge 1$ and split $\Stilde_{\phi}$ as
\begin{align*}
  \Stilde_{\phi}(\alpha)
  =
  \Stilde_{\phi,R}(\alpha) + \Stilde_{\phi,\infty}(\alpha)
  :=
  \left(\sum_{n\le R} + \sum_{n>R}\right)
    \Lambda(n) \, \e^{-\phi(n)/N} \, \e(\phi(n)\alpha).
\end{align*}
If $R\gg N^{1/k}L$, then $|\Stilde_{\phi,\infty}|\ll_{\phi} N^{1/k}$.
\end{lemma}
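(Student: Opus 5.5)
The plan is to bound the tail trivially and then use partial summation, exactly in the spirit of Lemma~\ref{bound-S}. Since $|\e(\phi(n)\alpha)| = 1$, we have
\[
  |\Stilde_{\phi,\infty}(\alpha)|
  \le
  \sum_{n>R} \Lambda(n) \e^{-\phi(n)/N}.
\]
The first step is to compare $\phi(n)$ with $n^k$ for large $n$. Since $\lead(\phi) = 1$ and $\deg\phi = k$, there is $n_0 = n_0(\phi)$ such that $\phi(n) \ge \frac12 n^k$ for all $n \ge n_0$. We may assume $R \ge n_0$, which holds for $N$ large because $R \gg N^{1/k}L$. Then
\[
  |\Stilde_{\phi,\infty}(\alpha)|
  \le
  \sum_{n>R} \Lambda(n) \e^{-n^k/(2N)},
\]
which removes every dependence on the lower-order coefficients of $\phi$ apart from the constant $n_0$.

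The second step is to apply partial summation with $\psi(x)$ and use \eqref{PNT}; the Chebyshev bound $\psi(x) \ll x$ is enough. Write $f(t) = \e^{-t^k/(2N)}$. Then
\[
  \sum_{n>R} \Lambda(n) f(n)
  =
  -\psi(R) f(R) - \int_R^{+\infty} \psi(t) f'(t) \, \dx t
  \ll
  \int_R^{+\infty} t \cdot \frac{k t^{k-1}}{2N} \e^{-t^k/(2N)} \, \dx t .
\]
Here the boundary term at $R$ has been dropped because it is non-positive; if one prefers, it can be kept and bounded by $R\,\e^{-R^k/(2N)}$, which is tiny. The remaining integral is at most $N^{-1}\int_0^{+\infty} t^k \e^{-t^k/(2N)}\, \dx t$. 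By \eqref{bound-int} with $\alpha = k$ and $M = 2N$, this is $\ll N^{-1} N^{(k+1)/k} = N^{1/k}$, which is the claimed bound.

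No step is a genuine obstacle. The only points needing care are the uniform lower bound $\phi(n) \ge \frac12 n^k$ beyond $n_0(\phi)$, which is where the dependence of the implied constant on $\phi$ enters, and the sign and size of the boundary term. The hypothesis $R \gg N^{1/k}L$ is stronger than the argument needs. It would give the much stronger bound $\ll N^{1/k}\exp(-cL^k)$ if we integrated only over $[R, \infty)$, but the stated bound requires just $R \ge n_0$.
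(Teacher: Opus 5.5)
Your argument is correct and follows the paper's route: bound the tail trivially, then apply partial summation against $\psi$. The paper keeps the integral on $[R,+\infty)$ to get the exponentially small bound $R\,\e^{-R^k/N}$; this is where $R \gg N^{1/k}L$ enters, and it is this stronger form that is reused for $S_{2,\infty}$ in Lemma~\ref{lemma-L2}. You instead extend the integral to $[0,+\infty)$ and use \eqref{bound-int}, and you make explicit the comparison $\phi(n)\ge \frac12 n^k$ for $n\ge n_0(\phi)$, which the paper's sketch leaves implicit.
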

\begin{proof}
By partial summation, as in the proof of Lemma~\ref{bound-S}, we see
that $\Stilde_{\phi,\infty} \ll R \e^{-R^k /  N}$, which is much
smaller than $N^{1/k}$ if we choose $R = N^{1/k}L$, say.
\end{proof}

The estimate of the term $I_{2}$ is based on Lemma~4 of \cite{bib12}.
We just remark that with the new bounds due to Guth and Maynard
\cite{bib4} the proof of Lemma 3.1 of \cite{bib1} remains the same,
except for the new estimates
$N(\sigma, T) \ll T^{30/13(1-\sigma)}\cdot(\log T)^B$  and
$N^{-1} \le \eta < N^{-1+13/15k\,-\eps}$, which have to be introduced
in (10) of Lemma 1 of \cite{lan16}.
We recall definition \eqref{sec1:2} for the value of $A(N; c)$.

\begin{lemma}
\label{lemma-L2}
Let $\eps$ be an arbitrarily small positive constant, let $k\ge 1$ be
an integer, consider $\phi(n)\in\mathbb{Z}[n]$ such that
$\deg(\phi)=k$ and $\lead(\phi)=1$; let $N$ be a sufficiently large
integer. Then there exists a positive constant
$c_{1}=c_{1}(\eps)$, which does not depend on $\phi$, such that
\[
  \int_{-\xi}^{\xi} | \Etilde_{\phi}(\alpha)|^2 \, \dx \alpha
  :=
  \int_{-\xi}^{\xi}
    \vert \Stilde_{\phi}(\alpha)-\gamma_{k}z^{-1/k} \vert^2 \, \dx\alpha
  \ll_{\phi,\,\eps} N^{2/k\,-1} A(N; - 2c_{1}),
\]
uniformly for $0\le \xi < N^{-1+13/15k\,-\eps}$.
\end{lemma}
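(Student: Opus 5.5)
The plan is to reduce the statement to the known monomial case, Lemma~4 of \cite{bib12} (equivalently Lemma~3.1 of \cite{bib1} with the Guth--Maynard density estimate), by comparing $\Stilde_{\phi}$ with $\Stilde_{k}$ from \eqref{sec1:2.5}. We write
\[
  \Etilde_{\phi}(\alpha)
  =
  \bigl(\Stilde_{\phi}(\alpha) - \Stilde_{k}(\alpha)\bigr)
  + \bigl(\Stilde_{k}(\alpha) - \gamma_{k} z^{-1/k}\bigr).
\]
Using $|a+b|^2 \le 2|a|^2 + 2|b|^2$, it suffices to bound the $L^2$ norm of each piece over $[-\xi,\xi]$. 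For the second piece we quote the monomial result. As remarked before the statement, its proof goes through unchanged once $N(\sigma,T)\ll T^{30/13(1-\sigma)}(\log T)^B$ is used. This gives $\ll N^{2/k-1}A(N;-2c_1)$ uniformly for $\xi<N^{-1+13/15k-\eps}$, with $c_1$ independent of $\phi$.

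For the first piece we work termwise. Using Lemma~\ref{lem-Sinf}, we truncate both sums at $R=N^{1/k}L$. The tails are $\ll R\e^{-R^k/N}$, which is negligible, say $\ll N^{-10}$; the same partial-summation argument applies to $\Stilde_k$. For $n\le R$ write $\phi(n)=n^k+\delta(n)$ with $|\delta(n)|\ll_\phi n^{k-1}$. Then
\[
  \bigl|\e^{-\phi(n)z}-\e^{-n^k z}\bigr|
  \le \e^{-\min(\phi(n),n^k)/N}\,|\delta(n)|\,|z|
  \ll_\phi \e^{-n^k/(2N)}\, n^{k-1}\Bigl(\frac1N+|\alpha|\Bigr).
\]
Here we used that $\phi(n)\ge n^k/2$ for $n\ge n_0(\phi)$, and the finitely many small $n$ contribute $\Odi{1}$. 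Summing with $\Lambda(n)$ via \eqref{PNT} and \eqref{bound-int} with exponent $k-1$ gives
\[
  \Stilde_{\phi}(\alpha)-\Stilde_{k}(\alpha)
  \ll_\phi 1 + N\Bigl(\frac1N+|\alpha|\Bigr) \ll 1+N|\alpha|.
\]
Hence
\[
  \int_{-\xi}^{\xi}|\Stilde_\phi-\Stilde_k|^2\,\dx\alpha \ll \xi + N^2\xi^3 .
\]

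This crude bound is the step I expect to be the main obstacle. For $\xi$ near $N^{-1+13/15k}$ the term $N^2\xi^3$ is of size $N^{-1+39/15k}$, which can exceed $N^{2/k-1}$. So the pointwise difference must be treated more carefully rather than simply bounded. My first attempt is to not separate $\delta(n)$ crudely, and instead expand $\e(\delta(n)\alpha)$ by Taylor's formula only where $|\alpha| n^{k-1}$ is small. On the remaining range I would use partial summation against $\Stilde_k$-type sums twisted by smooth weights. That is, I would write $\e^{-\phi(n)z}=\e^{-n^kz}g(n)$ with $g(t)=\e^{-\delta(t)z}$ smooth, and apply Abel summation with the monomial sum $\sum_{n\le t}\Lambda(n)\e^{-n^kz}$.

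If that is still too lossy, the fallback is to redo the proof of Lemma~4 of \cite{bib12} directly for $\phi$. Via Gallagher's lemma, the $L^2$ norm over $[-\xi,\xi]$ reduces to short-interval mean values of $\sum \Lambda(n)\e^{-\phi(n)/N}$ over $x<\phi(n)\le x+1/(2\xi)$. Since $\phi$ is increasing for large $n$ with inverse $\phi^{-1}(y)=y^{1/k}+O(1)$, this becomes a mean square of $\psi(u+h)-\psi(u)-h$ over intervals of length $h\asymp N^{1-1/k}/(\xi N)\cdot$(const). The explicit formula and the Guth--Maynard zero-density estimate then handle this exactly as in the monomial case, with the $O(1)$ perturbation of $\phi^{-1}$ contributing only lower-order errors. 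The main term comparison with $\gamma_k z^{-1/k}$ also needs checking, and costs only $\ll_\phi 1+N|\alpha|$ in the dual variable, which is harmless after Gallagher.
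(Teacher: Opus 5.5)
Your treatment of $\Stilde_k-\gamma_kz^{-1/k}$ is the same as the paper's $\Sigma_1$. Your diagnosis is also right: the pointwise bound $\Stilde_\phi-\Stilde_k\ll1+N|\alpha|$ only yields $\xi+N^2\xi^3$, which is too large. The gap is that neither proposed repair supplies the estimate that is actually needed.

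Taylor expansion alone changes nothing if the resulting terms are again bounded pointwise. With $\delta(n)=\phi(n)-n^k$, the first-order term $z\sum_{n\le R}\Lambda(n)\delta(n)\e^{-n^kz}$ is trivially $\ll N|z|$, which is the same loss. What you are missing is a mean-value theorem that captures the oscillation of $\e(n^k\alpha)$ inside the $L^2$ norm, namely Corollary~2 of \cite{mon74}. For frequencies $n^k$ with gaps $\gg n^{k-1}$ it gives $\int_{-\xi}^{\xi}\bigl|\sum_{n\le R}a_n\e(n^k\alpha)\bigr|^2\,\dx\alpha=\sum_{n\le R}|a_n|^2\bigl(2\xi+\mathcal{O}(n^{1-k})\bigr)$.

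Concretely, take $n\le R=N^{1/k}L$ and $|\alpha|\le\xi$ with $\xi\ge1/N$ (smaller $\xi$ is easier). Then $|z|\ll\xi$ and $|\delta(n)z|\ll N^{-\eps}L^{k-1}$, so you may expand $\e^{-\delta(n)z}-1=\sum_{1\le j\le j_0}(-\delta(n)z)^j/j!+\mathcal{O}\bigl(|\delta(n)z|^{j_0+1}\bigr)$. Pull $|z|^{2j}\ll\xi^{2j}$ out of the integral and apply the mean-value theorem; the $j$-th term is bounded by $\xi^{2j}\sum_{n\le R}\Lambda(n)^2\e^{-2n^k/N}n^{2j(k-1)}\bigl(\xi+n^{1-k}\bigr)$. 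For $j=1$ this is $\ll(\xi^3N^{2-1/k}+\xi^2N)L^{2k}$. Compared with your $N^2\xi^3$ it saves exactly the missing factor $N^{1/k}$. Both terms are smaller than $N^{2/k-1}$ by a power of $N$ for $\xi<N^{-1+13/15k-\eps}$. The remainder is bounded pointwise once $j_0$ is large in terms of $\eps$ and $k$.

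This is the paper's argument. The paper passes through the auxiliary sum $\Stilde_{k,\phi}$ so as to expand $\e^{-\delta(n)/N}-1$ and $\e(\delta(n)\alpha)-1$ separately, but your single difference works just as well. Two further points on your first repair:
\begin{itemize}
\item The ``remaining range'' where $|\alpha|n^{k-1}$ is not small is empty for $n\le R$.
\item Partial summation against the monomial partial sums, with the $L^2$ norm then taken inside the $t$-integral, loses the orthogonality between different $n$. The $\xi^2$-term becomes $\xi^2R^{2k-2}$ instead of $\xi^2R^{k}$, which is too large when $k\ge3$.
\end{itemize}

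Your fallback (Gallagher's lemma plus the explicit formula for $\psi$ at the points $\phi^{-1}(x)$, i.e.\ redoing Lemma~1 of \cite{lan16} for $\phi$) is a genuinely different and much heavier route. It can plausibly be carried out, since $\phi^{-1}(v)=v^{1/k}+c_\phi+O(v^{-1/k})$ and the zero terms $\int_{\phi^{-1}(x)}^{\phi^{-1}(x+H)}u^{\rho-1}\,\dx u$ obey the same bounds as in the monomial case. As written, though, it is only asserted, and two of its claims are wrong:
\begin{itemize}
\item With $H=1/(2\xi)$ and $x\asymp N$, the intervals have length $\phi^{-1}(x+H)-\phi^{-1}(x)\asymp N^{1/k-1}/\xi$, not $N^{-1/k}/\xi$. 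For $\xi\approx N^{-1+13/15k}$ the correct length is $\asymp u^{2/15}$ at $u\asymp N^{1/k}$, which is how the Guth--Maynard exponent enters.
\item The comparison with $\gamma_kz^{-1/k}$ cannot cost $1+N|\alpha|$ pointwise and be harmless, since that is exactly the $\xi+N^2\xi^3$ you had just rejected. What is true is that $\int_0^\infty\e^{-\phi(u)z}\,\dx u-\gamma_kz^{-1/k}\ll_\phi1$ uniformly in $\alpha$. This follows from the substitution $v=\phi(u)$ and $(\phi^{-1})'(v)=\frac1kv^{1/k-1}+O(v^{-1-1/k})$.
\end{itemize}
The mean-value route described above avoids all of this, since there the primes enter only through the monomial lemma.
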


\begin{proof}
We introduce the series
\[
  \Stilde_{k,\phi}(\alpha)
  :=
  \sum_{n\ge 1}\Lambda(n)e^{-n^k /N} \e(\phi(n)\alpha),
\]
and recall definition \eqref{sec1:2.5} for $\Stilde_k$.
We decompose
$\tilde{\mathcal{E}}_{\phi}(\alpha):=\tilde{S}_{\phi}(\alpha)-\gamma_{k}z^{1/k}$
into three parts, using the triangle inequality
\begin{align*}
\notag
  \int_{-\xi}^{\xi}
    |\Stilde_{\phi}(\alpha) - \gamma_{k} z^{1/k}|^2 \, \dx \alpha
  &\le
  \int_{-\xi}^{\xi}
    |\Stilde_{k}(\alpha) - \gamma_{k} z^{1/k}|^2\, \dx \alpha \\
\notag
  &\qquad+
  \int_{-\xi}^{\xi}
    |\Stilde_{\phi}(\alpha) - \Stilde_{k,\phi}(\alpha)|^2 \, \dx\alpha \\
\notag
  &\qquad+
  \int_{-\xi}^{\xi}
    |\Stilde_{k,\phi}(\alpha) - \Stilde_{k}(\alpha)|^2 \, \dx \alpha \\
%\label{sec2:7}
  &=:
  \Sigma_{1} + \Sigma_{2} + \Sigma_{3},
\end{align*}
say.
By Lemma 1 of \cite{lan16}, we know that
$\Sigma_{1} \ll_{k} N^{2/k\,-1} A(N;-2c_{1})$ for a suitable constant
$c_1 > 0$.
This is the only place of the proof where we need the restricted range
for $\xi$: from now on we just assume that
$0 \le \xi \le N^{-1 + 1/k \, -\eps}$.

We notice that the remainder of the proof is essentially independent
of the prime numbers.
For $\Sigma_{2}$ and $\Sigma_{3}$, we first need to
split the two differences as
\begin{align}
\label{eq1:lemma-L2}
  \Stilde_{\phi}(\alpha) - \Stilde_{k,\phi}(\alpha)
  &=
  \Bigl(\sum_{n \le M} + \sum_{n > M} \Bigr)
    \Lambda(n) (\e^{-\phi(n)/N} - \e^{-n^{k}/N}) \e(\phi(n)\alpha)
  =:
  S_{2,M}(\alpha) + S_{2,\infty}(\alpha), \\
\label{eq2:lemma-L2}
  \Stilde_{k,\phi}(\alpha) - \Stilde_{k}(\alpha)
  &=
  \Bigl(\sum_{n \le M} + \sum_{n > M} \Bigr)
    \Lambda(n) \e^{-n^{k}/N} (\e(\phi(n)\alpha) - \e(n^k\alpha))
  =:
  S_{3,M}(\alpha) + S_{3,\infty}(\alpha),
\end{align}
say, for suitable values $M = M(N) > 1$, which need not be the same in
both cases.

We write $\eta(n) := \phi(n)-n^k$ so that $d := \deg(\eta)\in [1,k-1]$
by our assumption.
Then we have to apply properly Corollary 2 of Montgomery and Vaughan
\cite{mon74}, for $r=n$, $\lambda_{n}=2\pi\phi(n)$ and
$\delta_{n} := \lambda_{n} - \lambda_{n-1} \gg_{\phi} n^{k-1}$ and partial
summation.
In the following we write $\eta(n) = a_d n^d + \cdots$, where
$a_d \in \Z \setminus \{ 0 \}$.

We recall the decomposition in \eqref{eq1:lemma-L2}.
Our goal is the estimate of
\[
  \Sigma_{2}
  \ll_{\phi}
  \int_{-\xi}^{\xi}
    |S_{2,M}(\alpha)|^2 \, \dx \alpha
  +
  \int_{-\xi}^{\xi}|S_{2,\infty}(\alpha)|^2 \, \dx \alpha.
\]
We choose $M = N^{1 / k} L$ and notice that $M^d = o(N)$, so that
$n^d \le M^d = o(N)$ for all the summands in $S_{2, M}$.
Hence, by the Taylor expansion of the exponential function, we get
\begin{align*}
  \e^{-\phi(n) / N} - \e^{-n^k / N}
  &=
  \e^{-n^k /N} (\e^{-\eta(n) / N} - 1)
  =
  \e^{-n^k /N}
  \Bigl( - \frac{\eta(n)}{N} + \Odig{\frac{\eta^2(n)}{N^2}} \Bigr) \\
  &=
  \e^{-n^k /N}
  \Bigl( - \frac{\eta(n)}{N} + \Odig{\frac{n^{2 d}}{N^2}} \Bigr)
\end{align*}
and therefore we can rewrite $S_{2,M}$ as
\begin{align}
\notag
  S_{2,M}(\alpha)
  &=
  - N^{-1}
  \sum_{n \le M}
    \Lambda(n) \e^{-n^k / N} \eta(n) \e(\phi(n) \alpha)
  +
  \Odig{N^{-2} \sum_{n \le M} \Lambda(n) \, \e^{-n^k/N} n^{2 d}} \\
\label{lemma-5.0}
  &=:
  \Sigma_{2, M, 1} + \Odi{N^{-2} M^{2 d + 1}},
\end{align}
say, by a weak form of the PNT.
Then, by Corollary 2 of \cite{mon74}, we obtain that
\begin{align*}
  \int_{-\xi}^{\xi} |\Sigma_{2, M, 1}(\alpha)|^2 \, \dx \alpha
  &=
  \sum_{n \le M}
    \Lambda(n)^2 \e^{-2n^k /N}\frac{\eta^2(n)}{N^2}
    (2 \xi + \Odim{\delta_{n}^{-1}}) \\
  &\ll
  \xi N^{-2}
  \sum_{n\le M} \Lambda(n)^2 \e^{-2n^k /N} n^{2 d}
  +
  N^{-2}
  \sum_{n \le M}\Lambda(n)^2 \e^{-2n^k /N} n^{2d+1-k}.
\end{align*}
Since $M^d = o(N)$, we have that
\begin{align}
\notag
  \xi N^{-2}
  \sum_{n \le M} \Lambda(n)^2 \e^{-2n^k /N} n^{2d}
  &\ll
  \xi N^{-2} L \sum_{n \le M} \Lambda(n) n^{2 d} \\
\label{lemma-5.1}
  &\ll
  \xi N^{-2} L M^{2 d + 1}.
\end{align}

The $\delta_{n}^{-1}$-term requires to distinguish three cases, the
first being the critical one:

\begin{enumerate}

\item If $2d+1-k \ge 0$, i.e. $2d+1\ge k$, then
\begin{align}
\label{lemma-5.2}
  N^{-2}
  \sum_{n \le M} \Lambda(n)^2 \e^{-2 n^k / N} n^{2 d + 1 - k}
  &\ll
  N^{-2} M^{2 d + 2 - k} L\ll_{\phi}N^{2/k\,-1}A(N;-2c_{1}),
\end{align}
by our choice of $M$.

\item If $2d+1-k=-1$, i.e. $k=2d+2$, then
\[
  N^{-2}\sum_{n \le M} n^{-1} 
  =
  N^{-2}\log M
 \ll_{\phi} N^{2/k\,-1}A(N;-2c_{1}).
\]

\item If $2d+1-k\le -2$, i.e. $k\ge 2d+3$, we have
\[
  N^{-2}\sum_{n \le M} n^{2d+1-k}
  \le
  N^{-2}\sum_{n \le M} n^{-2} \ll N^{-2}\ll_{\phi}N^{2/k\,-1}A(N;-2c_{1}).
\]
\end{enumerate}

Hence, by \eqref{lemma-5.0}, \eqref{lemma-5.1} and~\eqref{lemma-5.2}
we have
\begin{equation*}
  \int_{-\xi}^\xi
    \vert S_{2, M}(\alpha) \vert^2 \, \dx \alpha
  \ll
  \xi N^{-2} L M^{2 d + 1}
  +
  N^{-2} L M^{2 d + 2 - k}
  +
  \xi N^{-4} M^{4 d + 2}\ll N^{2 / k - 1} A(N; -2 c_1),
\end{equation*}
since $M = N^{1 / k} L$.
For $S_{2,\infty}$ we argue as in the evaluation of $S_{\phi,\infty}$ in
Lemma \ref{lem-Sinf} and we find that $M = N^{1/k} L$ is enough for
our purposes.

As before, we bound $\Sigma_{3}$ with two integrals that come from the
decomposition in \eqref{eq2:lemma-L2}:
\[
  \Sigma_{3}
  \ll
  \int_{-\xi}^{\xi} |S_{3,M}(\alpha)|^2 \, \dx \alpha
  +
  \int_{-\xi}^{\xi} |S_{3,\infty}(\alpha)|^2 \, \dx \alpha.
\]
We split the Taylor series of $\e(\eta(n)\alpha)$ into two parts, for
a finite $j_{0}>1$, depending on $\eps$, (thus, from now on, $\ll$
means $\ll_{\eps}$):
\begin{align*}
  S_{3,M}(\alpha)
  &=
  \sum_{n\le M}
    \Lambda(n) \e^{-n^k /N} \e(n^k \alpha) (\e(\eta(n)\alpha) - 1) \\ 
  &=
  \sum_{n\le M}
    \Lambda(n) \e^{-n^k /N} \e(n^k \alpha)
    \left(
      \sum_{j=1}^{j_{0}} \frac{(\eta(n)2\pi i\alpha)^j}{j!}
      +
      \mathcal{O}\left(\frac{\eta(n)^t |2\pi \alpha|^t}{t!}\right)
    \right) \\
  &=:
  (C) + (D),  
\end{align*}
for $t= j_{0}+1$.
We apply the triangle inequality and then Corollary $2$ of
\cite{mon74} to the $L^2$-norm of $(C)$, for every $j\in [1,j_{0}]$:
first we get that the $L^2$-norm of $(C)$ is bounded by
\begin{align*}
  \int_{-\xi}^{\xi}
  &\left| \sum_{j=1}^{j_{0}}
    \sum_{n\le M} \Lambda(n) \e^{-n^k /N} \e(n^k \alpha)
      \frac{(2\pi i\eta(n)\alpha)^j}{j!}\right|^2 \, \dx \alpha \\
  &\ll
  \sum_{j = 1}^{j_0}
    \int_{-\xi}^{\xi}
      \left| \sum_{n\le M} \Lambda(n) \e^{-n^k / N} \e(n^k \alpha)
               \frac{(2\pi i)^j \eta(n)^j \alpha^j}{j!} 
      \right|^2 \, \dx \alpha   \\
  &=:
  C_{1}+\dots+C_{j_{0}}.
\end{align*}
By the Corollary just quoted we get that
\begin{align*}
  C_{j}
  &=
  \int_{-\xi}^{\xi}
    \vert \alpha \vert^{2 j}
    \cdot
    \left|
      \sum_{n\le M}\Lambda(n)
        \e^{-n^k /N} \e(n^k \alpha) \frac{(2\pi i)^j\eta(n)^j}{j!}
    \right|^2 \, \dx \alpha \\
  &\ll
  \xi^{2j+1}\sum_{n\le M} \Lambda(n)^2 \e^{-2n^k /N} n^{2dj}
  +
  \xi^{2j}\sum_{n\le M}\Lambda(n)^2 \e^{-2n^k /N} n^{2dj+1-k} \\
  &=:
  (C_{j,1}) + (C_{j,2}).
\end{align*}
Our choice $M = N^{1/k} L$ and the fact that $d \le k - 1$ imply that
\begin{align*}
  (C_{j,1})
  &\ll
  L^2\xi^{2j+1}M^{2dj+1}
  \le
  L^2 N^{(-1+1/k\,-\eps)(2j+1)} (N^{1/k}L)^{2dj+1} \\ 
  &\ll
  N^{2/k\,-1-\eps-2j\eps} L^{2 j (k - 1) + 3} 
  \ll
  N^{2/k\,-1}A(N;-2c_{1})
\end{align*}
since $\eps > 0$. 
For $(C_{j,2})$ we need only consider the case when $d$ is large, so
that $2dj+1-k\ge 0$; we find that
\begin{align*}
  (C_{j,2})
  &\ll
  \xi^{2j} L^2 M^{2dj+2-k}
  \le
  N^{-2j+2j/k\,-2j\eps}L^2 (N^{1/k}L)^{2kj-2j+2-k} \\  
  &\ll
  N^{-2j\eps+2/k\,-1} L^{2kj-2j+4-k}
  \ll
  N^{2/k\,-1}A(N;-2c_{1}),
\end{align*}
as above.
Therefore, for every $1\le j\le j_{0}$, we can estimate $C_{j}$.

Now we compute and bound the $L^2$-norm of $(D)$:
\begin{align*}
  \int_{-\xi}^{\xi}
  &
    \left|\sum_{n\le M}\Lambda(n) \e^{-n^k/N} \e(n^k\alpha)
    \mathcal{O}\left(\frac{\eta(n)^t |2\pi\alpha|^t}{t!}\right) \right|^2
  \, \dx \alpha \\
  &\ll
  \int_{-\xi}^{\xi}
    |\alpha|^{2t}\left|\sum_{n\le M}\Lambda(n) \e^{-n^k/N}(2\pi)^t\eta(n)^t\right|^2
  \, \dx \alpha \\
  &\ll
  \xi^{2t+1}\left|\sum_{n\le M}\Lambda(n) \e^{-n^k/N}n^{dt}\right|^2
  \ll
  \xi^{2t+1}M^{2(dt+1)} L^2
  \le
  N^{(-1+1/k\,-\eps)(2t+1)}(N^{1/k}L)^{2dt+2} \\ 
  &\ll
  N^{-(2t+1) \eps - 1 + 3/k} L^{2 (k - 1) t + 4}.
\end{align*}
In order to conclude the proof of our Lemma we need to show that this
is $\ll N^{2/k\,-1} A(N;-2c_{1})$.
We choose, as we may, $t = j_0 + 1$ so large that $(2 t + 1) \eps > 2 / k$.
\end{proof}

\begin{lemma}
\label{lemma-tolev}
For $k>1$, $B=N^{2\eps}$ and $\xi_{0}:=N^{-1+13/15k\,-\eps}$, we have
\begin{align}
\int_{-\tau}^{\tau}|\Stilde_{\phi}(\alpha)|^2\,\dx\alpha \ll_{\phi}
\begin{cases}
 B N^{2/k\,-1}A(N;-2c_{1}) & \text{if \,$0 \le \tau < \xi_{0}$} \\
 \tau N^{1/k}L & \text{if \,$0\le \tau \le \frac{1}{2}$}.
\end{cases}
\end{align}
\end{lemma}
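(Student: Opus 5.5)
For the first bound, with $0\le\tau<\xi_0$, I split $\Stilde_\phi(\alpha)=\Etilde_\phi(\alpha)+\gamma_k z^{-1/k}$ and use $|a+b|^2\le 2|a|^2+2|b|^2$:
\[
  \int_{-\tau}^{\tau}|\Stilde_{\phi}(\alpha)|^2\,\dx\alpha
  \ll
  \int_{-\tau}^{\tau}|\Etilde_{\phi}(\alpha)|^2\,\dx\alpha
  +
  \int_{-\tau}^{\tau}|z|^{-2/k}\,\dx\alpha .
\]
The first integral is handled directly by Lemma~\ref{lemma-L2} with $\xi=\tau$, which is admissible since $\tau<\xi_0$. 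It gives $\ll N^{2/k-1}A(N;-2c_1)$, and this is even better than the stated bound by a factor $B$.

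For the second integral I use $|z|\asymp\min(N^{-1},|\alpha|)^{-1}$, that is $|z|^{-1}\asymp\min(N,|\alpha|^{-1})$. Then
\[
  \int_{-\tau}^{\tau}|z|^{-2/k}\,\dx\alpha
  \ll
  \int_{|\alpha|\le 1/N} N^{2/k}\,\dx\alpha
  +
  \int_{1/N}^{\tau}\alpha^{-2/k}\,\dx\alpha .
\]
The first piece is $N^{2/k-1}$. The second is bounded by $N^{2/k-1}$ times at most a logarithm when $k\ge 2$: it is $\ll N^{2/k-1}$ for $k\ge3$, and for $k=2$ it is $\ll\log(N\tau)\ll L$.

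Here lies the main obstacle. This main-term contribution is of size $N^{2/k-1}$ with no saving, so the bound really must be stated with some loss, and this is where the factor $B=N^{2\eps}$ is used. Indeed $N^{2/k-1}L\le BN^{2/k-1}A(N;-2c_1)$, because $N^{2\eps}$ dominates $L\,A(N;2c_1)$. Combining the two pieces gives the first case.

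For the second bound, valid for all $0\le\tau\le 1/2$, I do not use the approximation at all. Expanding the square,
\[
  \int_{-\tau}^{\tau}|\Stilde_\phi(\alpha)|^2\,\dx\alpha
  =
  \sum_{m,n}\Lambda(m)\Lambda(n)\e^{-(\phi(m)+\phi(n))/N}\int_{-\tau}^{\tau}\e((\phi(m)-\phi(n))\alpha)\,\dx\alpha .
\]
Equivalently, I apply Corollary~2 of \cite{mon74} with frequencies $\lambda_n=2\pi\phi(n)$ and gaps $\delta_n\gg_\phi n^{k-1}$, exactly as in the proof of Lemma~\ref{lemma-L2}. Only the tail $n\le n_0(\phi)$ fails to be monotone, and it is treated trivially. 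This yields
\[
  \int_{-\tau}^{\tau}|\Stilde_\phi|^2
  \ll
  \sum_n \Lambda(n)^2\e^{-2\phi(n)/N}\bigl(\tau+n^{1-k}\bigr).
\]
The first term is $\ll\tau L\sum_{n\ll N^{1/k}L}\Lambda(n)\ll\tau N^{1/k}L$, using the cutoff from Lemma~\ref{lem-Sinf} together with \eqref{PNT}. The second term is $\ll L\sum_{n\le N^{1/k}L}\Lambda(n)n^{1-k}$, which is $\ll L^2$ for $k=2$ and $\ll L$ for $k\ge3$.

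This last term is not dominated by $\tau N^{1/k}L$ when $\tau$ is extremely small. I expect that only $\tau\gg N^{-1/k}L$ matters in the application, where $\tau=B/H$ and $H<N^{1-\eps}$, so $\tau\ge N^{-1+3\eps}$. If needed, I would instead bound small $\tau$ trivially by $2\tau\sup|\Stilde_\phi|^2\ll\tau N^{2/k}$ via Lemma~\ref{bound-S}. Alternatively, I would cover $[-\tau,\tau]$ by $O(1+\tau N^{1/k})$ intervals of length $N^{-1/k}$ and apply the large sieve inequality to each, with the sum spread over $n\le N^{1/k}L$ and integer frequencies $\phi(n)$. This yields $(\tau+N^{-1})\sum_{n\le N^{1/k}L}\Lambda(n)^2\ll(\tau+N^{-1})N^{1/k}L^2$, which I would reconcile with the stated logarithmic power by using $\Lambda(n)^2\le\Lambda(n)\log n$ more carefully via partial summation.
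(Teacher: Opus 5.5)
\textbf{There is a genuine gap in both cases.}

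\textbf{First case.} Your treatment follows the paper's route exactly: split off $\gamma_k z^{-1/k}$, apply Lemma~\ref{lemma-L2} with $\xi=\tau$, then compute $\int|z|^{-2/k}$. For $k=2$ your computation is correct. For $k\ge 3$, however, the claim $\int_{1/N}^{\tau}\alpha^{-2/k}\,\dx\alpha\ll N^{2/k-1}$ is false. Since $2/k<1$, the integral is governed by its upper endpoint:
\[
  \int_{1/N}^{\tau}\alpha^{-2/k}\,\dx\alpha
  =\frac{k}{k-2}\bigl(\tau^{1-2/k}-N^{2/k-1}\bigr)
  \asymp\tau^{1-2/k}=N^{2/k-1}(N\tau)^{1-2/k}
  \qquad(\tau\ge 2/N).
\]
For $\tau$ near $\xi_0$ the factor $(N\tau)^{1-2/k}$ is about $N^{(13/(15k)-\eps)(1-2/k)}$. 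That is a fixed power of $N$ which $B=N^{2\eps}$ cannot absorb when $\eps$ is small.

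The paper does obtain $\tau^{1-2/k}+N^{2/k-1}$ at this point. It then tries to absorb $\tau^{1-2/k}$ by taking $\tau=B/H$ and asserting $(H/N)^{2/k-1}\ll B^{2/k}A(N;-2c_1)$. But for $k\ge 3$ the left side is $(N/H)^{1-2/k}$, which is far larger than $N^{4\eps/k}$ when $H$ is near $N^{1-13/(15k)+\eps}$. So you cannot repair this step by following the paper: the first case, as stated, cannot hold for $k\ge3$. Lemma~\ref{lemma-L2} gives $\int_{-\tau}^{\tau}|\Etilde_\phi|^2\,\dx\alpha\ll N^{2/k-1}A(N;-2c_1)=o(\tau^{1-2/k})$ for $\tau\ge 2/N$. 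The reverse triangle inequality in $L^2(-\tau,\tau)$ then yields $\int_{-\tau}^{\tau}|\Stilde_\phi|^2\,\dx\alpha\gg\tau^{1-2/k}$. This is not $\ll BN^{2/k-1}A(N;-2c_1)$ once $N^{-1+2k\eps/(k-2)}\le\tau<\xi_0$, a nonempty range for small $\eps$. What your argument genuinely gives for $k\ge3$ is $\ll\tau^{1-2/k}+N^{2/k-1}$, and that is sharp.

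\textbf{Second case.} Your Montgomery--Vaughan computation gives $\tau N^{1/k}L$ plus an additive $L^2$ (for $k=2$) or $L$ (for $k\ge 3$). That is the right outcome, and it is the type of bound in Lemma~2 of \cite{lan16}, to which the paper simply defers; that lemma carries such an additive term. But the additive term cannot be removed, so none of your patches can produce the bound as stated. By continuity, $(2\tau)^{-1}\int_{-\tau}^{\tau}|\Stilde_\phi|^2\,\dx\alpha\to|\Stilde_\phi(0)|^2\asymp N^{2/k}$ as $\tau\to0^{+}$, which is incompatible with $\ll\tau N^{1/k}L$.

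In particular, your large-sieve bound $(\tau+N^{-1})N^{1/k}L^2$ is false. At $\tau=1/N$ the integral is $\asymp N^{2/k-1}$ (main term $\gamma_k z^{-1/k}$ plus Lemma~\ref{lemma-L2}), which exceeds $N^{1/k-1}L^2$.

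Your expectation that only $\tau\gg N^{-1/k}L$ matters in the application is also mistaken. There $\tau=B/H<N^{-1+13/(15k)+\eps}<N^{-1/k}$, and the bound is used for every $\alpha\in[\tau,\frac12]$ in the partial integration. So it is needed precisely where the additive term dominates. The correct statement to aim for is $\int_{-\tau}^{\tau}|\Stilde_\phi|^2\,\dx\alpha\ll\tau N^{1/k}L+L^2$.
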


\begin{proof}
If $0\le \tau < \xi_{0}$ by the triangle inequality, adding and subtracting $\gamma_{k}z^{-1/k}$, we have
\begin{align*}
\int_{-\tau}^{\tau}|\Stilde_{\phi}(\alpha)|^2\,\dx\alpha &\le 2\int_{-\tau}^{\tau}|\Etilde_{\phi}(\alpha)|^2\,\dx\alpha + 2\gamma_{k}^2\int_{-\tau}^{\tau}|z|^{-2/k}\,\dx\alpha.
\end{align*}
By previous Lemma \ref{lemma-L2}, we know that the first summand is $\ll_{\phi}N^{2/k\,-1}A(N;-2c_{1})$, thus we have to show that 
\begin{align}
\label{int-z}
\int_{-\tau}^{\tau}|z|^{-2/k}\,\dx\alpha\ll B N^{2/k\,-1}A(N;-2c_{1}).
\end{align}
Since $|z|^2 = N^{-2} + (2\pi \alpha)^2$, by the change of variable $x=2\pi\alpha N$, we can rewrite the left hand side of \eqref{int-z} as
\begin{align*}
N^{2/k\,-1}\int_{-2\pi\tau N}^{2\pi\tau N} (1+x^2)^{-1/k}\,\dx x = 2N^{2/k\,-1}\int_{0}^{2\pi\tau N}(1+x^2)^{-1/k}\,\dx x,
\end{align*}
which is $\ll_{k} 1+\log(2\pi \tau N)$, if $k=2$. We may therefore neglect this contribution, in view of the bound we want to achieve. For $k\ge 3$, we find that
\begin{align*}
\int_{0}^{2\pi\tau N}(1+x^2)^{-1/k}\,\dx x \ll 1+\int_{1}^{2\pi\tau N}x^{-2/k}\,\dx x \ll_{k} (2\pi\tau N)^{1-2/k} + 1,
\end{align*}
Therefore, the l.h.s. of \eqref{int-z} is 
\begin{align*}
N^{2/k\,-1}\int_{-2\pi\tau N}^{2\pi\tau N} (1+x^2)^{-1/k}\,\dx x&\ll_{k}N^{2/k\,-1}( N^{1-2/k}\tau^{1-2/k} +1)=\tau^{1-2/k} + N^{2/k\,-1} \\ &=(BH^{-1})^{1-2/k} + N^{2/k\,-1}\ll B N^{2/k\,-1} A(N;-2c_{1}) 
\end{align*}
since $(H/N)^{2/k\,-1}\ll B^{2/k} A(N;-2c_{1})$.
For $0 \le \tau \le 1/2$, the proof relies on Corollary~2 of
\cite{mon74} and is essentially the same as the proof of Lemma~2 of
\cite{lan16}.
\end{proof}

\begin{remark}
\label{tolev}
With respect to \cite{bib1}, we do not evaluate the $L^2$-norm of $\Stilde_{\phi}$ via Lemma~7 of Tolev \cite{bib11}; in the notation of Lemma~\ref{lem-Sinf}, we are now dealing with the finite sum $\Stilde_{\phi,R}:=\Stilde_{\phi}|_{[1,R]}$, instead of $\Stilde_{\phi}|_{[\delta R, R]}$, for $\delta>0$ and $R=R(N)$. As a consequence, in the sub-interval $[1,\delta R]$, we cannot ensure an upper bound depending on $R$, for $\int_{-\tau}^{\tau}|\Stilde_{\phi,R}(\alpha)|^2\,\dx\alpha$. 
\end{remark}

\begin{lemma}[Lemma~4 of \cite{lan16jnt}]
\label{lemma-mt}
Let $N$ be a positive integer, $z = z(\alpha) = N^{-1} - 2 \pi i \alpha$
and $\mu > 0$.
Then, uniformly for $n \ge 1$ and $X > 0$ we have
\[
  \int_{-X}^X z^{-\mu} \e(- n \alpha) \, \dx \alpha
  =
  \e^{- n / N} \frac{n^{\mu - 1}}{\Gamma(\mu)}
  +
  \mathcal{O}_{\mu} \Bigl( \frac1{n X^{\mu}} \Bigr).
\]
\end{lemma}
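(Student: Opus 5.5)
The plan is to work directly from a contour-integral representation of $z^{-\mu}$ and evaluate first the integral over the whole real line, then the two tails $|\alpha|>X$. Since $\Re z = 1/N > 0$, the Gamma integral gives
\[
  z^{-\mu} = \frac{1}{\Gamma(\mu)} \int_0^{+\infty} t^{\mu-1} \e^{-tz} \, \dx t ,
\]
with the principal branch. Equivalently, by Fourier inversion, for $n>0$,
\[
  \int_{-\infty}^{+\infty} z^{-\mu} \e(-n\alpha) \, \dx \alpha
  = \frac{1}{\Gamma(\mu)} n^{\mu-1} \e^{-n/N}.
\]
To justify this, note that $t\mapsto t^{\mu-1}\e^{-t/N}\mathbf{1}_{t>0}/\Gamma(\mu)$ has Fourier transform $z^{-\mu}$ with $z = N^{-1}-2\pi i\alpha$. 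When $\mu>1$ the transform is absolutely integrable, so inversion holds pointwise at $t=n$.

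For $0<\mu\le 1$ the integral over $\mathbb{R}$ is only conditionally convergent. Here I would evaluate $\int_{-X}^{X}$ directly by contour deformation. Substituting $s = z$, so that $\dx\alpha = \dx s/(-2\pi i)$ and $\e(-n\alpha) = \e^{ns}\e^{-n/N}$, the integral becomes
\[
  \frac{\e^{-n/N}}{2\pi i} \int_{1/N - 2\pi i X}^{1/N + 2\pi i X} s^{-\mu} \e^{ns} \, \dx s .
\]
I would compare this with Hankel's formula
\[
  \frac{1}{2\pi i}\int_{\mathcal{H}} s^{-\mu}\e^{ns}\,\dx s = \frac{n^{\mu-1}}{\Gamma(\mu)} .
\]
Closing the vertical segment into a Hankel contour adds two horizontal segments from $1/N\pm 2\pi i X$ to $-\infty \pm 2\pi i X$. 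Integrating by parts once on these segments, or simply bounding $|s|^{-\mu}\ge (2\pi X)^{-\mu}$ on them together with $\int_{-\infty}^{1/N} \e^{n\sigma}\,\dx\sigma = \e^{n/N}/n$, bounds their contribution by $\ll_\mu \e^{n/N} n^{-1}X^{-\mu}$. After multiplying by $\e^{-n/N}$ this is $\ll_\mu 1/(nX^\mu)$, which is exactly the error term. The cut along the negative real axis is handled by the Hankel contour itself, since its keyhole part around $0$ reproduces $n^{\mu-1}/\Gamma(\mu)$.

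The main obstacle is the uniformity. The constant must not depend on $N$, $n$ or $X$, including for small $X$. I would check this on the horizontal segments through $|\e^{ns}| = \e^{n\sigma}$ and $|s|\ge 2\pi X$, which requires no condition on $X$ beyond $X>0$. The bound $\e^{-n/N}\cdot \e^{n/N}/n = 1/n$ is then exact. So the direct estimate, without integration by parts, already yields the claimed uniform $\mathcal{O}_\mu(1/(nX^\mu))$ for every $\mu>0$. This single argument therefore covers both ranges of $\mu$.
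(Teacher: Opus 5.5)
Your proof is correct. The paper gives no proof of its own here: it quotes the result from \cite{lan16jnt}. The standard argument behind that lemma proceeds in three steps.
\begin{enumerate}
\item Write $\int_{-X}^{X}=\int_{\mathbb{R}}-\int_{|\alpha|>X}$.
\item Evaluate the full-line integral via the vertical-line formula $\frac{1}{2\pi i}\int_{(a)}v^{-\mu}\e^{v}\,\dx v=1/\Gamma(\mu)$ with $v=nz$.
\item Handle the two tails by an integration by parts in $\alpha$. This step is what produces the factor $1/n$. For $\mu\le 1$ it is also needed just to make sense of the tails, since $|z|^{-\mu}$ is not integrable there.
\end{enumerate}

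You instead close the segment into a Hankel loop with horizontal legs $\sigma\pm 2\pi i X$, $\sigma\le 1/N$. On these legs $|\e^{ns}|=\e^{n\sigma}$ supplies the integrability. The crude bound $(2\pi X)^{-\mu}\e^{n/N}/n$ per leg then already gives the error term. It does so uniformly in $N$, $n\ge 1$, $X>0$ and for every $\mu>0$ at once, with the explicit constant $(2\pi)^{-\mu}/\pi$. No conditionally convergent integral over $\mathbb{R}$ ever appears.

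The one thing you should add is the short justification that this rectangular loop may replace the usual Hankel contour. This is Cauchy's theorem in $\mathbb{C}\setminus(-\infty,0]$, with the short connectors at $\operatorname{Re} s=-R$ contributing $\ll X R^{-\mu}\e^{-nR}\to 0$.

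Since the Hankel argument covers all $\mu>0$, your opening paragraph on Fourier inversion for $\mu>1$ is superfluous and can be dropped.
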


\section{Proof of the main result}

\subsection{Proof of Theorem~\ref{thm-unc}}

Recalling definitions \eqref{sec1:1} and \eqref{sec1:2} and formulas \eqref{I123}, we are ready
to show our result.
The evaluation of $I_{1}$ directly follows from Lemma~\ref{lemma-mt},
with $\mu = j / k$, $X = \tau = B / H$: in fact we have
\[
  I_{1}
  =
  \int_{-\tau}^{\tau} \frac{U(-\alpha, H) \e(-N\alpha)} {z^{j/k}} \, \dx \alpha
  =
  \frac{1}{\gamma_{k,j}}\sum_{n=N+1}^{N+H} \e^{-n/N}n^{(j-k)/k}
  +
  \mathcal{O}_{k,j}\biggl(\frac{H}{N}\biggl(\frac{H}{B}\biggr)^{j/k}\biggr).
\]
By Lemma~3.6 of \cite{bib1}, we have that
\[
  \sum_{n=N+1}^{N+H} \e^{-n/N}n^\lambda
  =
  \e^{-1} H N^\lambda + \mathcal{O}_{\lambda}(H^2 N^{\lambda-1}),
\]
therefore, with $\lambda=(j-k)/k$, we have that
\[
  I_{1}
  =
  \frac{1}{\e \gamma_{k,j}} H N^{(j-k)/k}
  +
  \mathcal{O}\biggl(H^2N^{(j-k)/k\,-1} + \frac{H}{N}\biggl(\frac{H}{B}\biggr)^{j/k}\biggr)
\]
and the second summand in the error term can be neglected, since
$H\le N$ and $B>N^{\eps}$.

For $I_{2}$, we need Lemma~\ref{lemma-L2}, the estimate
\[
  |z|^{-1} \ll \min\{N,\,|\alpha|^{-1}\}
\]
and the identity (2) of \cite{bib1} - that we recall below and that is
applied with $x=\Stilde_{\phi}(\alpha)$, $y=\gamma_{k}z^{-1/k}$ and
$j\ge k\ge 2$:
\begin{equation}
\label{sec3:8}
  x^{j}-y^{j}
  =
  (x - y)^2 \sum_{\ell=1}^{j-1} \bigl(\ell x^{j-1-\ell}y^{\ell-1}\bigr)
  +
  j (x - y) y^{j - 1}.
\end{equation}
From $\eqref{sec3:8}$, Lemma \ref{lemma-L2} and the Cauchy-Schwarz inequality, as in (12) of \cite{bib1}, it follows that
\begin{align}
\notag
  I_{2}
  &\ll_{\phi} 
  H 
  \Bigl(
    \sum_{\ell=1}^{j-1}
      \int_{-B/H}^{B/H}|x - y|^2 |x|^{j-1-\ell}|y|^{\ell-1}\,\dx\alpha 
  \Bigr)
  +
  H \int_{-B/H}^{B/H} |x-y||y|^{j-1}\,\dx\alpha \\ \notag
&\ll_{\phi}H\sum_{\ell=1}^{j-1}\max_{\alpha} |x|^{j-1-\ell}\max_{\alpha}|y|^{\ell-1}\int_{-B/H}^{B/H}|x - y|^2 \,\dx\alpha \\ \notag
&\qquad+H\left(\int_{-B/H}^{B/H}|x - y|^2 \,\dx\alpha\right)^{1/2}\left(\int_{-B/H}^{B/H}|y|^{2(j-1)} \,\dx\alpha\right)^{1/2} \\ \notag
&\ll_{\phi}H\sum_{\ell=1}^{j-1} N^{(j-1-\ell)/k}\cdot N^{(\ell -1)/k} N^{2/k\,-1}A(N;-2c_{1}) \\ \notag
&\qquad + HN^{1/k\,-1/2}A(N;-c_{1})\left(\int_{-B/H}^{B/H}|z|^{-2(j-1)/k} \,\dx\alpha\right)^{1/2} \\ 
\notag
&\ll_{\phi} H N^{j/k\,-1} A(N;-2c_{1}) \\ \label{est-I_2-1}
&\qquad+ HN^{1/k\,-1/2} A(N;-c_{1})\cdot N^{(j-1)/k\,-1/2}\left(\int_{-2\pi N B/H}^{2\pi N B/H}\frac{\dx x}{(1+x^2)^{(j-1)/k}}\right)^{1/2} \\ \label{est-I_2-2}
&\ll_{\phi}H N^{j/k\,-1} A(N;-2c_{1}) + HN^{j/k\,-1}A(N;-c_{1}),
\end{align} 
if $j\ge k\ge 2$ and $j\ne 2$, since the integral in \eqref{est-I_2-1} converges and its contribution is equal to $\mathcal{O}_{k,j}(1)$. For $j=k=2$, the summand in \eqref{est-I_2-1} is $\ll_{\phi} HA(N;-c_{1})\cdot N^{j/k\,-1}L^{1/2}$.
Thus, by \eqref{est-I_2-2}, we infer that
\[
  I_{2}
  \ll_{\phi}
  H N^{j/k\,-1} A(N; - c_1),
\]
where $c_{1}(\eps)>0$ is the constant in Lemma~\ref{lemma-L2},
provided that $B H^{-1} < N^{-1+13/15k\,-\eps}$, that is,
$H > N^{1 - 13 / 15 k + \eps}$.
For the evaluation of $I_{3}$ we set
\[
  F(\xi)
  :=
  \int_{0}^{\xi}|\Stilde_{\phi}(\alpha)|^2 \, \dx \alpha.
\]
By partial integration and Lemma~\ref{lemma-tolev}, for
$\tau = B / H$, we have
\begin{align*}
  \int_{\mathcal{C}}|\Stilde_{\phi}(\alpha)|^2\,\frac{\dx \alpha}{\alpha}
  &\ll_{\phi}
  \frac{F(\alpha)}{\alpha}\bigg|_{\tau}^{1/2}
  +
  \int_{\tau}^{1/2} \frac{F(\alpha)}{\alpha^2} \, \dx \alpha \\
  &\ll_{\phi}
  \tau^{-1}F(\tau) + \int_{\tau}^{1/2}\frac{\alpha N^{1/k}L}{\alpha^2}\,\dx\alpha\\ 
  &\ll_{\phi} HN^{2/k\,-1}A(N;-2c_{1}) + N^{1/k}L^2 \\ &\ll_{\phi}  HN^{2/k\,-1}A(N;-2c_{1}),
\end{align*}
since 
\[
HN^{2/k\,-1}> N^{1/k} \iff H> N^{1-1/k},
\]
which is consistent with our choice of $H$.
Then, recalling that $j \ge 2$ and $|U(\alpha, H)|\ll\min\{H, |\alpha|^{-1}\}$,
we obtain
\begin{align*}
  I_{3}
  &\ll_{\phi}
  \max_{\alpha\in[-1/2,1/2]} |\Stilde_{\phi}(\alpha)|^{j-2}
  \int_{\mathcal{C}}|\Stilde_{\phi}(\alpha)|^2 \, \frac{\dx \alpha}{\alpha} \\
  &\ll_{\phi}
  N^{(j-2)/k}\cdot HN^{2/k\,-1}A(N;-2c_{1}) \\ 
  &=  HN^{j/k\,-1}A(N;-2c_{1}).
\end{align*}
Summing the three terms $\gamma_k^j I_1$, $I_2$ and $I_3$ (defined in \eqref{I123}) we arrive at
\begin{align}
\notag
  \sum_{n=N+1}^{N+H} \e^{-n/N}R_{\phi,j}(n)
  &=
  \frac{\gamma_{k}^j}{\gamma_{k,j}} \cdot \, \e^{-1}HN^{(j-k)/k} \\
\label{final-est}
  &\qquad+
  \mathcal{O}\bigl(N^{(j-k)/k} H \cdot A(N; -c_1)\bigr).
\end{align}
To complete the proof, we remove the exponential factor at the
left-hand side: in fact, as $\e^{-n/N} \in [\e^{-2}, \e^{-1}]$,
\begin{align*}
  \e^{-2} \sum_{n=N+1}^{N+H} R_{\phi,j}(n)
  &\le
  \sum_{n=N+1}^{N+H} \e^{-n/N}R_{\phi,j}(n) \ll_{\phi} HN^{(j-k)/k}
\end{align*}
so that
\[
  \sum_{n=N+1}^{N+H} R_{\phi,j}(n)
  \ll_{\phi} H N^{(j-k)/k}.
\]
We use this inequality for an estimate of the error term that appears
from the development
$\e^{-n/N} = \e^{-1-(n-N)/N} = \e^{-1}(1+\mathcal{O}((n-N)N^{-1}))$:
\begin{align*}
  \sum_{n=N+1}^{N+H} \e^{-n/N}R_{\phi,j}(n)
  &=
  \sum_{n=N+1}^{N+H} \e^{-1}(1+\mathcal{O}((n-N)N^{-1}))R_{\phi,j}(n) \\
  &=
  \e^{-1} \sum_{n=N+1}^{N+H} R_{\phi,j}(n)
  +
  \mathcal{O}_{\phi}(H^2N^{(j-k)/k\,-1}),
\end{align*}
and Theorem~\ref{thm-unc} follows from~\eqref{final-est} since we
assumed that $H < N^{1 - \eps}$ so that the last error term above is
smaller than the one in~\eqref{final-est}.

\subsection{The case \texorpdfstring{$\lead(\phi)>1$}{lead}}

If we slightly perturb the previous case, removing the hypothesis
$\lead(\phi) = a_{k} = 1$, we get the new variable $w=a_{k}z$, instead of
$z$. It implies that the main term of Theorem~\ref{thm-unc} is now
multiplied by $a_{k}^{-j/k}$, for $j\ge k$, while $I_{3}$ is estimated
as in Lemma~\ref{bound-S}, thanks to a similar change of variables
$u=a_{k}t^k /N$, in the integral term.

In order to bound $I_{2}$, we have to verify that Lemma~\ref{lemma-L2}
 keeps holding. That's what we are going to show in the following result.

\begin{lemma}
Let $\eps$ be an arbitrarily small positive constant, $k\ge 1$ be an
integer and $a_{k}\in\mathbb{N}^*$. Then there exists a positive
constant $c_{1}=c_{1}(\eps)$, which does not depend on $k$, such that
\[
  \int_{-\xi}^{\xi}
    |\Stilde_{\phi}(\alpha)-(a_{k}\gamma_{k})^{-1/k}z^{-1/k}|^2 \,
    \dx \alpha \ll_{\phi} N^{2/k\,-1} A(N; -c_{1}),
\]
uniformly for $0\le\xi < N^{-1+13/15k\,-\eps}$.
\end{lemma}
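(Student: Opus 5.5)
The plan is to compare $\Stilde_{\phi}$ with the natural main term $\gamma_{k}(a_{k}z)^{-1/k}=\gamma_{k}a_{k}^{-1/k}z^{-1/k}$, and then to handle separately the discrepancy between this and the constant $(a_{k}\gamma_{k})^{-1/k}$ that appears in the statement. By the triangle inequality,
\[
  \int_{-\xi}^{\xi}|\Stilde_{\phi}(\alpha)-(a_{k}\gamma_{k})^{-1/k}z^{-1/k}|^2\,\dx\alpha
  \le
  2\int_{-\xi}^{\xi}|\Stilde_{\phi}(\alpha)-\gamma_{k}(a_{k}z)^{-1/k}|^2\,\dx\alpha
  +
  2|\Delta|^2\int_{-\xi}^{\xi}|z|^{-2/k}\,\dx\alpha ,
\]
where $\Delta:=\gamma_{k}a_{k}^{-1/k}-(a_{k}\gamma_{k})^{-1/k}$.

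For the first integral I would follow the proof of Lemma~\ref{lemma-L2} with $\eta(n):=\phi(n)-a_{k}n^{k}$, which has degree $d\le k-1$. The term playing the role of $\Sigma_{1}$ involves $\sum_{n}\Lambda(n)\e^{-a_{k}n^{k}/N}\e(a_{k}n^{k}\alpha)$. I would put $N'=N/a_{k}$ and $\beta=a_{k}\alpha$. Then this sum is exactly $\Stilde_{k}(\beta)$ at level $N'$, and the corresponding $z'=1/N'-2\pi i\beta$ equals $a_{k}z$. Lemma~1 of \cite{lan16}, applied at level $N'$ with $\dx\alpha=\dx\beta/a_{k}$, gives the bound $\ll N'^{2/k-1}A(N';-2c_{1})\ll_{\phi}N^{2/k-1}A(N;-2c_{1})$. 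The range transforms to $|\beta|\le a_{k}\xi$, and since $a_{k}N^{-1+13/15k-\eps}\le (N/a_{k})^{-1+13/15k-\eps/2}$ for large $N$, it suffices to run that lemma with $\eps/2$.

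The terms playing the roles of $\Sigma_{2}$ and $\Sigma_{3}$ go through verbatim. I would take $M=N^{1/k}L$ and Taylor-expand $\e^{-\eta(n)/N}$ and $\e(\eta(n)\alpha)$. Corollary~2 of \cite{mon74} applies with $\delta_{n}\gg_{\phi}n^{k-1}$, which is still true since $a_{k}\ge1$. The tails are handled as in Lemma~\ref{lem-Sinf}, and the case analysis on $2d+1-k$ is unchanged; only constants depending on $a_{k}$ enter.

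The main obstacle is the discrepancy term $|\Delta|^2\int_{-\xi}^{\xi}|z|^{-2/k}\,\dx\alpha$. If $k=1$, then $\gamma_{1}=1$, so $\Delta=0$ and the statement follows. For $\xi\le N^{-1}$ the term is trivially $\le 2|\Delta|^2\xi N^{2/k}\ll N^{2/k-1}$. On the full range, however, the computation in Lemma~\ref{lemma-tolev} shows that for $k\ge3$ this term is $\asymp|\Delta|^2\xi^{1-2/k}$. That exceeds $N^{2/k-1}A(N;-c_{1})$ once $\xi N\to\infty$, unless $\Delta=0$, i.e.\ unless $\gamma_{k}^{1+1/k}=1$. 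This fails for $k\ge2$, because $\Gamma(1+1/k)<1$; for $k=2$ the term is of order $|\Delta|^2\log(\xi N)$, which is still too large.

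So my plan establishes the bound exactly as worded when $k=1$ or when $\xi\ll N^{-1}$. In general, it establishes it with the approximant read as $\gamma_{k}(a_{k}z)^{-1/k}$. That reading is also the one consistent with the main term being multiplied by $a_{k}^{-j/k}$, as stated above. I would flag that, for $k\ge2$ and the full range of $\xi$, the literal constant $(a_{k}\gamma_{k})^{-1/k}$ cannot be proved by this or any method. The reason is that the first integral is small, so the discrepancy term is a genuine lower-order obstruction.
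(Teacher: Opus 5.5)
This is essentially the paper's proof. The paper reduces to $\phi(n)=a_kn^k$ by running Lemma~1 of \cite{lan16} with $w=a_kz$ in place of $z$; your rescaling $N'=N/a_k$, $\beta=a_k\alpha$ is a cleaner way to make that precise, and it only needs the fact that the proof of that lemma never uses integrality of $N$. The paper then repeats the $\Sigma_2,\Sigma_3$ argument of Lemma~\ref{lemma-L2}, and what this actually produces is the approximant $\gamma_k(a_kz)^{-1/k}=\gamma_ka_k^{-1/k}z^{-1/k}$. So your diagnosis is right: $(a_k\gamma_k)^{-1/k}$ is a misprint, and $\gamma_ka_k^{-1/k}$ is the constant consistent with the factor $a_k^{-j/k}$ in the main term.

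One small slip in your side remark: the literal version does not hold for all $\xi\ll N^{-1}$. Already at $\xi\asymp N^{-1}$ the discrepancy term is $\asymp|\Delta|^2N^{2/k-1}$, which is not $\ll N^{2/k-1}A(N;-c_1)$ since $A(N;-c_1)\to0$. Hence for $k\ge2$ the literal statement can only hold on ranges $\xi\ll N^{-1}A(N;-c_1)$.
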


\begin{proof}
We first restrict to $\phi(n)=a_{k}n^k$; in fact, this case is
straightforward from Lemma 1 of \cite{lan16}, replacing $z$ with
$w=a_{k}z$ and checking that the constant factor $a_{k}$ in
\[
  w^{-\rho/k}
  =
  (a_{k}|z|)^{-\rho/k}\exp(-i(\rho/k)
  \arctan{2\pi N a_{k}\alpha}-\pi |\gamma|/2k)
\]
does not affect the estimate of Lemma 1 of \cite{lan16}. Then,
mimicking the strategy of Lemma \ref{lemma-L2}, we extend the result
to a generic polynomial $\phi\in\mathbb{Z}[n]$.
\end{proof}

\section{Acknowledgements}
\noindent We thank the referee for pointing out several inaccuracies in the first version of this paper. 

\providecommand{\bysame}{\leavevmode\hbox to3em{\hrulefill}\thinspace}
\providecommand{\MR}{\relax\ifhmode\unskip\space\fi MR }
% \MRhref is called by the amsart/book/proc definition of \MR.
\providecommand{\MRhref}[2]{%
  \href{http://www.ams.org/mathscinet-getitem?mr=#1}{#2}
}
\providecommand{\href}[2]{#2}

\end{document}